\definecolor{webgreen}{rgb}{0,.5,0}
\definecolor{webbrown}{rgb}{.6,0,0}
\theoremstyle{plain}
\newtheorem{theorem}{Theorem}
\newtheorem{proposition}[theorem]{Proposition}
\theoremstyle{definition}
\theoremstyle{remark}
\title{The periodic complexity function of the Thue--Morse word,
  the Rudin--Shapiro word, and the period-doubling word}
\author{Narad Rampersad\footnote{
Department of Math/Stats,
University of Winnipeg,
515 Portage Ave.,
Winnipeg, MB, R3B 2E9
Canada; {\tt narad.rampersad@gmail.com}.}}
\begin{document}
\maketitle
\begin{abstract}
  We revisit the periodic complexity function $h_{\bf w}(n)$
  introduced by Mignosi and Restivo.  This function gives the average
  of the first $n$ local periods of a recurrent infinite word
  ${\bf w}$.  We give a different method than that of Mignosi and
  Restivo for computing the asymptotics of the periodic complexity
  function of the Thue--Morse word and show how to apply the method to
  other automatic sequences, like the Rudin--Shapiro word and the
  period-doubling word.
\end{abstract}

\section{Introduction}
Mignosi and Restivo~\cite{MR13} introduced a new complexity measure
for infinite words called the \emph{periodic complexity}.  This
function is defined based on the \emph{local period} at each position
of the infinite word.  Let ${\bf w} = w_0w_1w_2\cdots$ be an infinite
word.  The \emph{periodicity function} $p_{\bf w}(i)$ is defined as
follows.  The value of $p_{\bf w}(i)$ is the length of the shortest
prefix $u$ of $w_iw_{i+1}w_{i+2}\cdots$ such that either \emph{$u$ is
  a suffix of $w_0\cdots w_{i-1}$ or $w_0\cdots w_{i-1}$ is a suffix
  of $u$}, if such a word $u$ exists.  If no such $u$ exists, then
$p_{\bf w}(i)=\infty$.  However, if ${\bf w}$ is recurrent, which will
always be the case in this paper, then $p_{\bf w}(i) < \infty$ for all
$i$.

Since the values of $p_{\bf w}(i)$ can fluctuate wildly, it is not
that suitable as a complexity function.  Mignosi and Restivo therefore
defined the \emph{periodic complexity function} $h_{\bf w}(i)$ as the
\emph{average of the periodicity function}; that is, if
$$P_{\bf w}(i) = \sum_{j=0}^{i-1}p_{\bf w}(j)$$
is the \emph{summatory function} of $p_{\bf w}(i)$, then $h_{\bf
  w}(i) = (1/i)P_{\bf w}(i)$ for $i \geq 1$.

Mignosi and Restivo studied the periodicity function and the
periodicity complexity function for both the \emph{Thue--Morse word}
$${\bf t} = 0110100110010110\cdots$$
and the \emph{Fibonacci word}
$${\bf f} = 0100101001001010\cdots.$$
They proved that $h_{\bf t}(n) = \Theta(n)$ and
$h_{\bf f}(n) = \Theta(\log n)$.  Schaeffer~\cite{Sch13} studied the
periodicity function of Sturmian words using the Ostrowski
representation of natural numbers.

In this paper we study $p_{\bf t}(i)$ and $h_{\bf t}(i)$ with the aid
of the computer program Walnut~\cite{Walnut}.  We get a more precise
description of these functions than the ones given in~\cite{MR13} and
we show how to apply these techniques to other automatic sequences,
such as the Rudin--Shapiro sequence.

\section{Periodic complexity of the Thue--Morse word}
The Thue--Morse word ${\bf t} = t_0t_1t_2\cdots$ is defined by
\[ t_i = \begin{cases} 0 &\text{ if the number of $1$'s in the binary
      representation of $i$ is even,}\\ 1 &\text{ otherwise}. \end{cases} \]
Table~\ref{tab:per_fn_t} shows some initial values of $p_{\bf t}(i)$.
\begin{center}
  \begin{tabular}{|*{17}{c|}}
    \hline
    $i$ & 0 & 1 & 2 & 3 & 4 & 5 & 6 & 7 & 8 & 9 & 10 & 11 & 12 & 13 & 14
    & 15 \\
    \hline
    $p_{\bf t}(i)$ & 1 & 3 & 1 & 6 & 2 & 12 & 1 & 12 & 1 & 24 & 1 & 24
                                                          & 2 & 24 & 1
    & 24 \\
    \hline
  \end{tabular}
  \captionof{table}{Initial values of $p_{\bf t}(i)$\label{tab:per_fn_t}}
\end{center}

We can get a automaton that computes the binary representation of
$p_{\bf t}(i)$ with the following Walnut commands
(see~\cite[Section~10.8.12]{Sha21}):
\begin{verbatim}
def tmEq "?msd_2 Ak (k<n) => T[i+k]=T[j+k]":

def tmRepWd "?msd_2 (i>=n & $tmEq(i,i-n,n)) | (n>i & $tmEq(0,n,i))":

def tmLocPer "?msd_2 (n>0) & $tmRepWd(i,n) & Am (m>0 & m<n) =>
    ~$tmRepWd(i,m)":
\end{verbatim}
This produces the automaton in Figure~\ref{fig:per_fn_t}.
By examining this automaton, one obtains the following result,
which is a more precise version of~\cite[Proposition~3.18]{MR13}.
\begin{figure}[htb]
  \centering
  \includegraphics[scale=0.8]{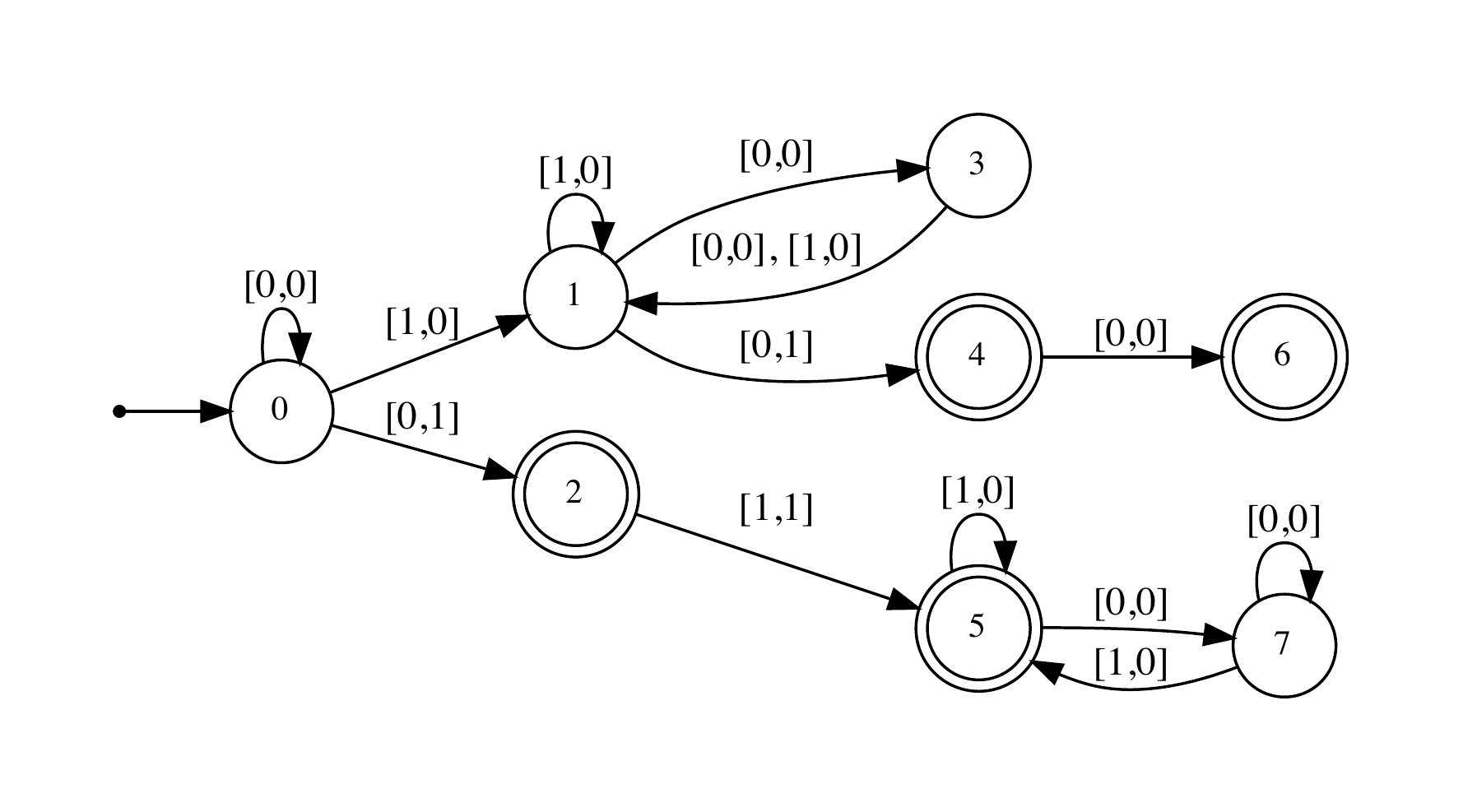}
  \caption{Automaton for the pair $(i,p_{\bf t}(i))$}\label{fig:per_fn_t}
\end{figure}

\begin{proposition}\label{prop:per_fn}
  We have
  \begin{itemize}
  \item $p_{\bf t}(i) \in \{1,2\}$ if $i$ is even; and,
  \item $p_{\bf t}(i) = 3\cdot 2^t$ if $i$ is odd and $2^t \leq
    i < 2^{t+1}$.
  \end{itemize}
\end{proposition}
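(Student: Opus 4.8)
My plan is to prove both items directly from the identities $t_{2j}=t_j$ and $t_{2j+1}=\overline{t_j}$ (writing $\overline{0}=1$ and $\overline{1}=0$), from overlap-freeness of ${\bf t}$, and from recognizability of the Thue--Morse morphism $\mu\colon 0\mapsto 01,\ 1\mapsto 10$; the automaton of Figure~\ref{fig:per_fn_t} then provides an independent certificate, and one could equally well just submit the two claimed identities to Walnut. The even case is short. We have $p_{\bf t}(0)=1$ straight from the definition, and for even $i=2j\ge 2$ the candidate $n=1$ works precisely when $t_{i-1}=t_i$, i.e.\ when $\overline{t_{j-1}}=t_j$; while if instead $t_{j-1}=t_j$, then the length-$2$ factor $t_{i-2}t_{i-1}$ equals $t_j\,\overline{t_j}=t_i t_{i+1}$, so $n=2$ works. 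Hence $p_{\bf t}(i)\in\{1,2\}$.

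For the odd case, fix odd $i$ with $2^t\le i<2^{t+1}$. The upper bound $p_{\bf t}(i)\le 3\cdot 2^t$ is the easy half: since $i<2^{t+1}<3\cdot 2^t$, the value $n=3\cdot 2^t$ falls under the second ($n>i$) alternative in the definition of $p_{\bf t}$, so it suffices to verify $t_k=t_{3\cdot 2^t+k}$ for $0\le k<i$. As $k<2^{t+1}$ and $3\cdot 2^t$ has binary representation $11\underbrace{0\cdots0}_{t}$, a one-line case split on the value of bit $t$ of $k$ shows that $k$ and $3\cdot 2^t+k$ have binary digit sums of the same parity, which is exactly the desired equality.

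The lower bound $p_{\bf t}(i)\ge 3\cdot 2^t$ is the crux and, I expect, the main obstacle. Write $i=2m+1$. First I would rule out every candidate $n\le i$: grouping ${\bf t}$ into the length-$2$ blocks $\mu(t_0)\mu(t_1)\cdots$ lets one ``de-substitute'' the equality $t_{i-n}\cdots t_{i-1}=t_i\cdots t_{i+n-1}$ by comparing block contents, and one then finds that if $n$ is even ${\bf t}$ must have period $n/2$ on a window of odd length $n+1$ around position $m$ --- an overlap --- while if $n$ is odd the resulting letter equations collapse to $t_m=\overline{t_m}$; neither can happen. Consequently $p_{\bf t}(i)$ equals the least $n>i$ at which the prefix $t_0\cdots t_{i-1}$ reoccurs in ${\bf t}$, and it remains to show this does not occur for any $n$ with $i<n<3\cdot 2^t$. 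The case $n=i+1$ gives an overlap of period $i+1$ and is excluded outright; and for $n\ge i+2$, recognizability (a factor of ${\bf t}$ of length $\ge 4$ occurs only at positions of a single parity, here even) forces $n$ to be even, whereupon cancelling one copy of $\mu$ turns an occurrence of $t_0\cdots t_{i-1}$ at $n$ into an occurrence of the prefix of length $(i+1)/2$ at the position $n/2$, which satisfies $(i+1)/2<n/2<3\cdot 2^{t-1}$ --- structurally the same assertion with $t$ decreased by one. An induction then completes the proof, with the inductive statement phrased for prefixes of \emph{every} length in a dyadic range $(2^{s-1},2^s]$ (not merely the odd lengths of the proposition) and the finitely many small cases, where recognizability does not yet apply, checked by hand. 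The genuinely fiddly work is exactly in setting up this descent --- tracking how the length ranges and the strict inequalities transform, and pinning down the right base cases; certifying the two identities of Proposition~\ref{prop:per_fn} in Walnut, or reading them off Figure~\ref{fig:per_fn_t}, avoids all of it and is the route I would actually take.
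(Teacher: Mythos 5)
The paper offers no combinatorial proof of this proposition at all: it is simply read off the Walnut-generated automaton of Figure~\ref{fig:per_fn_t}, which is precisely the fallback you name in your closing sentence. Your hand argument is therefore a genuinely different route. The even case and the upper bound $p_{\bf t}(i)\le 3\cdot 2^t$ (via the digit-sum parity of $3\cdot 2^t+k$ for $k<2^{t+1}$) are correct and complete as written.

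The lower bound, however, has a concrete error at the step you single out as "excluded outright." If the prefix $t_0\cdots t_{i-1}$ recurs at position $n=i+1$, then $t_0\cdots t_{2i}$ is a word of length $2i+1$ with a border of length $i$, hence with period $i+1$ and exponent $(2i+1)/(i+1)<2$. That is not an overlap --- it is not even a square --- so overlap-freeness excludes nothing here. This case needs its own argument (for instance: since $n=i+1$ is even, de-substitution shows the prefix of length $(i+1)/2$ would occur at position $(i+1)/2$, i.e., ${\bf t}$ would begin with a square, and one must separately prove ${\bf t}$ has no square prefix); note also that in the inductive formulation you propose, the analogous boundary case $q=j+1$ reappears at every level of the descent, not only at the top. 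A second, smaller gloss: for odd $n$ with $3\le n\le i$ the letter equations do not collapse to $t_m=\overline{t_m}$; tracking them through $t_{2a}=t_a$, $t_{2a+1}=\overline{t_a}$ instead yields roughly $(n+1)/2$ consecutive equal letters of ${\bf t}$, so you must invoke the absence of $000$ and $111$ (or the fact that the only odd square periods in ${\bf t}$ are $1$ and $3$) rather than a one-line contradiction. The remaining machinery --- recognizability forcing $n$ even, the halving step, and the bookkeeping $(i+1)/2<n/2<3\cdot 2^{t-1}$ --- is sound.
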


We can then bound the summatory function of $p_{\bf t}(i)$.

\begin{proposition}\label{prop:sum_fn}
  For $n \geq 1$, we have
  \[ \frac38 (n-1)^2 + \frac{n}{2} \leq P_{\bf t}(n) \leq \frac34 n^2+n+1.\] 
\end{proposition}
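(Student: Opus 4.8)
The plan is to split $P_{\bf t}(n)=\sum_{j=0}^{n-1}p_{\bf t}(j)$ according to the parity of $j$ and apply Proposition~\ref{prop:per_fn} to each part. Let $E_n$ denote the contribution of the even indices $j\in\{0,\dots,n-1\}$ and $O_n$ that of the odd indices, so that $P_{\bf t}(n)=E_n+O_n$. In this range there are $\lceil n/2\rceil$ even indices and $\lfloor n/2\rfloor$ odd ones, and the odd ones are exactly $1,3,\dots,2\lfloor n/2\rfloor-1$, whose sum is $\lfloor n/2\rfloor^{2}$.

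For the even indices Proposition~\ref{prop:per_fn} gives $1\le p_{\bf t}(j)\le 2$, hence
\[
\left\lceil \frac n2\right\rceil \;\le\; E_n \;\le\; 2\left\lceil \frac n2\right\rceil .
\]
For an odd index $j$ with $2^{t}\le j<2^{t+1}$ we have $p_{\bf t}(j)=3\cdot 2^{t}$, and since $j/2<2^{t}\le j$ this yields $\tfrac32 j<p_{\bf t}(j)\le 3j$. Summing over the odd $j<n$ and using that those $j$ sum to $\lfloor n/2\rfloor^{2}$, we get
\[
\frac32\left\lfloor \frac n2\right\rfloor^{2} \;\le\; O_n \;\le\; 3\left\lfloor \frac n2\right\rfloor^{2},
\]
the lower bound being the trivial equality $0\le 0$ in the degenerate case $n=1$. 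Adding the two pairs of inequalities gives
\[
\left\lceil \frac n2\right\rceil+\frac32\left\lfloor \frac n2\right\rfloor^{2}\;\le\; P_{\bf t}(n)\;\le\; 2\left\lceil \frac n2\right\rceil+3\left\lfloor \frac n2\right\rfloor^{2}.
\]

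The final step is to replace the floors and ceilings by the elementary estimates $\tfrac n2\le\lceil n/2\rceil\le\tfrac{n+1}2$ and $\tfrac{n-1}2\le\lfloor n/2\rfloor\le\tfrac n2$: the left-hand side then becomes at least $\tfrac38(n-1)^{2}+\tfrac n2$ and the right-hand side becomes at most $\tfrac34 n^{2}+n+1$, which is precisely the assertion. I do not expect a genuine obstacle here; the only thing requiring care is applying the floor/ceiling estimates consistently for both parities of $n$ (and checking $n=1$ separately, where $O_1$ is empty), and the slack visible in the stated bounds — the extra $n+1$ on top and the $(n-1)^{2}$ rather than $n^{2}$ on the bottom — is exactly what is lost in these crude estimates together with the factor-of-two gap between $2^{t}$ and $j$ for odd $j$.
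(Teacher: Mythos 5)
Your proof is correct and follows essentially the same route as the paper: split the sum by parity, use $1\le p_{\bf t}(j)\le 2$ for even $j$ and $\tfrac32 j< p_{\bf t}(j)\le 3j$ for odd $j$ (derived from $p_{\bf t}(j)=3\cdot 2^t$ with $2^t\le j<2^{t+1}$), and then apply the standard floor/ceiling estimates. The only cosmetic difference is that you evaluate the sum of the odd indices exactly as $\lfloor n/2\rfloor^2$ before bounding, whereas the paper bounds the sum $\sum 3i$ directly; the resulting inequalities are identical.
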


\begin{proof}
  We split the sum $P_{\bf t}(n) = \sum_{i=0}^{n-1}p(i)$ into even and
  odd indexed terms.  By Proposition~\ref{prop:per_fn}, we have
  \[ \frac{n}{2} \leq \sum_{\substack{i=0\\i\text{
          even}}}^{n-1}p(i) \leq n+1.\]
  Again, by Proposition~\ref{prop:per_fn}, we have
  \[  \sum_{\substack{i=0\\i\text{
          odd}}}^{n-1} p(i) \leq \sum_{\substack{i=0\\i\text{
          odd}}}^{n-1} 3i \leq 3(n/2)^2 = \frac34 n^2.\]
  and
  \[  \sum_{\substack{i=0\\i\text{
          odd}}}^{n-1} p(i) \geq \sum_{\substack{i=0\\i\text{
          odd}}}^{n-1} 3i/2 \geq (3/2)[(n-1)/2]^2 = \frac38 (n-1)^2.\]
  Hence,
  \[ \frac38 (n-1)^2 + \frac{n}{2} \leq P_{\bf t}(n) \leq \frac34 n^2+n+1.\] 
\end{proof}

This gives the following bounds on the periodic complexity of
${\bf t}$, which are an improvement on the inequalities from the
proof of~\cite[Proposition~3.19]{MR13}.

\begin{theorem}
  For $n \geq 1$, we have
  \[ 3n/8 - 1/4 \leq h_{\bf t}(n) \leq 3n/4+2.\]
  In particular, we have $h_{\bf t}(n) = \Theta(n)$.
\end{theorem}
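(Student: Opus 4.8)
The plan is simply to divide the inequalities of Proposition~\ref{prop:sum_fn} through by $n$, invoking the definition $h_{\bf t}(n) = P_{\bf t}(n)/n$, and then to tidy up the resulting rational expressions in $n$ into the advertised clean linear bounds. So the proof will be short: almost everything has already been done in Propositions~\ref{prop:per_fn} and~\ref{prop:sum_fn}.

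For the lower bound, I would start from $h_{\bf t}(n) \ge \tfrac38\cdot\frac{(n-1)^2}{n} + \tfrac12$ and use the identity $\frac{(n-1)^2}{n} = n - 2 + \frac1n \ge n - 2$, which holds for all $n \ge 1$, to obtain $h_{\bf t}(n) \ge \tfrac38(n-2) + \tfrac12 = \tfrac{3n}{8} - \tfrac14$. For the upper bound, dividing $P_{\bf t}(n) \le \tfrac34 n^2 + n + 1$ by $n$ gives $h_{\bf t}(n) \le \tfrac34 n + 1 + \tfrac1n$, and since $\tfrac1n \le 1$ for $n \ge 1$ this is at most $\tfrac34 n + 2$.

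Finally, the statement $h_{\bf t}(n) = \Theta(n)$ is immediate: both the lower and the upper bound are linear in $n$ with strictly positive leading coefficients ($3/8$ and $3/4$, respectively). I do not anticipate any genuine obstacle; the only point worth a moment's care is that the division by $n$ is legitimate for every $n \ge 1$, which is fine because Proposition~\ref{prop:sum_fn} is itself stated in that range, so the small values of $n$ — where the rounding in the even/odd split could otherwise be delicate — are already absorbed.
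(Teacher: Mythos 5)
Your proof is correct and matches the paper's (implicit) argument: the theorem is stated as an immediate consequence of Proposition~\ref{prop:sum_fn}, obtained exactly by dividing through by $n$ and simplifying as you do. The arithmetic in both bounds checks out.
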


In this case, we were fortunate that the automaton in
Figure~\ref{fig:per_fn_t} was rather simple.  For more complicated
sequences, this may not be the case, so next we explore other methods
for analyzing the asymptotics of $P_{\bf t}(i)$.  To apply these methods,
we first need a \emph{linear representation} for $p_{\bf t}(i)$.  That
is, we need a integer row vector $v$, an integer column vector $w$,
and a pair of integer matrices $M_0$ and $M_1$, such that
$$
p_{\bf t}(i) = vM_{i_\ell-1}M_{i_{\ell-2}}\cdots M_{i_0}w,
$$
where $i_{\ell-1} i_{\ell-2}\cdots i_0$ is the binary representation of
$i$.  Walnut can produce a linear representation for $p_{\bf t}(i)$
with the command
\begin{verbatim}
eval tmLocPer_enum i "?msd_2 En $tmLocPer(i,n) & m<n & ~$tmLocPer(i,m)":
\end{verbatim}
The output of this command is a Maple worksheet containing the
following values for $v$, $w$, $M_0$ and $M_1$.
\[v = [1,0,1,0,0,0],\]
\[ M_0 =
  \begin{bmatrix}
    1&0&1&0&0&0\\
    0&0&0&0&1&1\\
    0&0&0&0&0&0\\
    0&0&0&0&0&2\\
    0&1&0&1&0&0\\
    0&0&0&0&0&2
  \end{bmatrix},
  \quad\quad
  M_1 =
  \begin{bmatrix}
    0&1&0&1&0&0\\
    0&1&0&1&0&0\\
    0&0&0&1&0&0\\
    0&0&0&2&0&0\\
    0&1&0&1&0&0\\
    0&0&0&2&0&0
  \end{bmatrix},
\]
\[w = [1,1,0,1,1,0]^T.\]

Sequence defined by such linear representations are called
\emph{$2$-regular sequences} (in general, \emph{$q$-regular
  sequences}).  Dumas~\cite{Dum13} obtained a description of the
asymptotics of the summatory function of $q$-regular sequences.
Heuberger and Krenn~\cite{HK20} have also recently given a similar
description of these asymptotics.

To make use of these results, we need a number of
definitions (see~\cite[Section~3.2]{HK20}).  Let
$X(N) = \sum_{n=0}^{N-1} x(n)$ be the summatory function of a sequence
$x(n )$ for which we have a linear representation consisting of a row
vector $v \in \mathbb{C}^d$, a column vector $w \in \mathbb{C}^d$, and
$q$ matrices $M_0, \ldots, M_{q-1} \in \mathbb{C}^{d \times d}$.
That is,
\begin{equation}\label{eq:linrep}
  x(n) = vM_{n_{\ell-1}}M_{n_{\ell-2}}\cdots M_{n_0}w,
\end{equation}
where $n_{\ell-1} n_{\ell-2}\cdots n_0$ is the base-$q$ representation
of $n$.  Let $\|\cdot\|$ denote any norm on $\mathbb{C}^d$, as well as
its induced matrix norm.  Define $M := M_0+M_1+\ldots+M_{q-1}$.
Choose $R>0$ such that
$\|M_{r_1}M_{r_2}\cdots M_{r_\ell}\| = O(R^\ell)$ holds for all
$\ell \geq 0$ and all $r_1,\ldots,r_{\ell} \in \{0,\ldots,q-1\}$.
That is, the number $R$ is an upper bound for the \emph{joint spectral
  radius} of $M_0, \ldots, M_{q-1}$.  Let $\sigma(M)$ denote the set
of eigenvalues of $M$.  For $\lambda \in \mathbb{C}$, if
$\lambda \in \sigma(M)$, let $m(\lambda)$ denote the size of the
largest Jordan block of $M$ associated with $\lambda$, and let
$m(\lambda)=0$ otherwise.  The notation $\{z\}$ denotes the fractional
part of a real number $z$.  The following result is
essentially~\cite[Theorem~1]{Dum13} as presented in the first part
of~\cite[Theorem~A]{HK20}.

\begin{theorem}\label{thm:all-in-one}
With the above definitions, we have
\begin{align*}
  X(N) = &\sum_{\substack{\lambda\in\sigma(M) \\|\lambda|>R}} N^{\log_q
           \lambda} \sum_{0 \leq k <m(\lambda)} \frac{(\log
           N)^k}{k!}\Phi_{\lambda k}(\lbrace \log_q N \rbrace)\\
         &+ O\left(N^{\log_q R}(\log N)^{\max\{m(\lambda) :
           |\lambda|=R\}}\right),           
\end{align*}
where the $\Phi_{\lambda k}$ are certain $1$-periodic continuous
functions.  The big O ``error term'' can be omitted if there are no
eigenvalues $\lambda \in \sigma(M)$ with $|\lambda|\leq R$.
\end{theorem}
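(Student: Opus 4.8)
The plan is to follow Dumas's approach~\cite{Dum13}: decompose $X(N)$ according to the base-$q$ digits of $N$, and then read off the asymptotics from the Jordan structure of $M$. The starting point is the ``block'' identity obtained by summing the linear representation~\eqref{eq:linrep} over all $n$ with a fixed number $j$ of base-$q$ digits (leading zeros allowed): since $M_0 + \cdots + M_{q-1} = M$ and the sum factors across digit positions, one gets $\sum_{0 \le n < q^{j}} x(n) = vM^{j}w$. (Here one should first arrange, or observe from the representations that Walnut produces, that appending leading zeros does not change $x(n)$, e.g.\ because $vM_0 = v$; otherwise a routine modification of the representation achieves this.) More generally, if $N$ has base-$q$ representation $d_{\ell-1}d_{\ell-2}\cdots d_0$, then classifying each $n < N$ by the most significant position $j$ at which it first drops below $N$ gives the exact formula
\[
  X(N) = \sum_{j=0}^{\ell-1} v\, M_{d_{\ell-1}} M_{d_{\ell-2}} \cdots M_{d_{j+1}}\, B_{d_j}\, M^{j}\, w, \qquad B_s := M_0 + M_1 + \cdots + M_{s-1},
\]
where $\ell = \Theta(\log_q N)$ is the number of digits.

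Next I would substitute the Jordan decomposition of $M$, writing $M^{j} = \sum_{\lambda \in \sigma(M)} \lambda^{j} Q_\lambda(j)$ with $Q_\lambda(j)$ a matrix-valued polynomial in $j$ of degree less than $m(\lambda)$. The displayed formula then becomes a double sum over eigenvalues $\lambda$ and digit positions $j$. For each $\lambda$ with $|\lambda| > R$, one factors out $\lambda^{\ell}$, which equals $N^{\log_q \lambda}$ times a bounded quantity depending only on $\{\log_q N\}$, and is left with a tail sum $\sum_{j} \big(v M_{d_{\ell-1}}\cdots M_{d_{j+1}}\big)\,\lambda^{j-\ell}\,(\cdots)\,w$; because the prefix products are $O(R^{\ell-j})$ by the joint-spectral-radius bound and $|\lambda| > R$, this tail converges, and after grouping the powers of $\log N$ coming from $Q_\lambda(j)$ it defines, in the limit, the $1$-periodic functions $\Phi_{\lambda k}$ of $\{\log_q N\}$. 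The remaining contributions — those from eigenvalues with $|\lambda| \le R$, together with the error from truncating the convergent tails — are estimated termwise using $\|M_{d_{\ell-1}}\cdots M_{d_{j+1}}\| = O(R^{\ell-1-j})$ and $\|M^{j}\| = O(j^{\,m-1}R^{j})$ with $m = \max\{m(\lambda):|\lambda|=R\}$, and these sum to $O\big(N^{\log_q R}(\log N)^{m}\big)$; this also explains why the error term may be dropped when no eigenvalue satisfies $|\lambda| \le R$.

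The main obstacle is the regularity of the periodic fluctuations. One must show that the partial sums defining each $\Phi_{\lambda k}$ converge \emph{uniformly} in the base-$q$ address of $N$, and that the limit is genuinely continuous and $1$-periodic rather than merely defined on $q$-adic rationals. Continuity forces a consistency check at the wrap-around points $N \approx q^{k}$, where a digit string ending in $0$'s must be identified with the neighbouring one ending in $(q-1)$'s, and it is exactly here that the normalization making leading zeros irrelevant is needed. Dumas handles this via a fixed-point / self-similarity argument for the fluctuation, while Heuberger and Krenn~\cite{HK20} recast it using Mellin--Perron summation; I would invoke one of these rather than reproduce it in full. Everything else is bookkeeping with geometric series and Jordan blocks.
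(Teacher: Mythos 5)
The paper does not actually prove this theorem: it is imported from Dumas~\cite{Dum13} as restated in the first part of Theorem~A of Heuberger and Krenn~\cite{HK20} (the surrounding text says as much), so there is no in-paper argument to compare yours against. Judged against the argument in those references, your outline is essentially the correct one, and it follows Dumas's ``direct'' route (digit decomposition plus Jordan structure) rather than the Mellin--Perron route that Heuberger and Krenn also develop. The exact identity $X(N)=\sum_{j=0}^{\ell-1} vM_{d_{\ell-1}}\cdots M_{d_{j+1}}B_{d_j}M^j w$ is right, and your caveat about leading zeros is precisely the normalization one needs (in the paper's Thue--Morse representation one can check that $vM_0=v$, so the issue is benign there); the Jordan substitution, the geometric convergence of the tails for $|\lambda|>R$ via the joint-spectral-radius bound, and the estimate $\sum_{j<\ell} R^{\ell-1-j}\cdot j^{m-1}R^{j}=O(R^{\ell}\ell^{m})=O\bigl(N^{\log_q R}(\log N)^{m}\bigr)$ for the small eigenvalues all match the statement. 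You are also right that the genuinely delicate point is the uniform convergence, continuity, and well-definedness of the $\Phi_{\lambda k}$ as functions of $\{\log_q N\}$, which you sensibly delegate to the sources. Two small cautions: first, your closing claim that the omittability of the error term is ``explained'' by the absence of eigenvalues with $|\lambda|\le R$ is too quick, since the truncation error incurred by replacing the finite tail sums by their limits is itself of order $R^{\ell}$ times a power of $\ell$ and must be handled separately in that case; second, $N^{\log_q\lambda}$ for negative or complex $\lambda$ requires a branch convention, which is not cosmetic given that $-2\in\sigma(M)$ in the Rudin--Shapiro application. Neither point undermines the outline, but both are places where you are relying on the cited proofs rather than supplying one.
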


Note that we have defined the linear representation of $x(n)$ in terms
of the most-significant-digit first representation of $n$.  It can
also be defined using the least-significant-digit first representation
of $n$ (as it is in~\cite{HK20}).  One can easily convert from one
representation to the other by taking the transpose of $v$,
$M_0,\ldots,M_{q-1}$, $w$, and the transpose of Eq.~\ref{eq:linrep}.
Since the eigenvalues of a matrix and its transpose are the same, we
can still apply Theorem~\ref{thm:all-in-one}, regardless of the choice
of representation.

If we return to the linear representation of $p_{\bf t}(i)$ that we
computed earlier, we have
\[
  M = M_0+M_1 =
  \begin{bmatrix}
    1& 1& 1& 1& 0& 0\\
    0& 1& 0& 1& 1& 1\\
    0& 0& 0& 1& 0& 0\\
    0& 0& 0& 2& 0& 2\\
    0& 2& 0& 2& 0& 0\\
    0& 0& 0& 2& 0& 2
  \end{bmatrix}.
\]
The set of eigenvalues of $M$ is $\sigma(M) = \{4,2,1,0,-1\}$, where
each eigenvalue has multiplicity $1$, except the eigenvalue $0$, which
has multiplicity $2$.  To compute $R$ it is convenient for us to
choose the $\|\cdot\|_\infty$ norm on $\mathbb{C}^6$ (i.e., the
maximum norm), which induces the maximum row sum norm on
$\mathbb{C}^{6\times 6}$.  Since the maximum row sum of $M_0$ and
$M_1$ is $2$, we can take $R=2$.  This is enough information to apply
Theorem~\ref{thm:all-in-one} to $P_{\bf t}(n)$, which gives the
following result.

\begin{theorem}\label{thm:t_Pandh}
  We have
  \[ P_{\bf t}(n) = n^2\Phi_{40}(\lbrace \log_2 n \rbrace) +
    O(n\log n), \]
  and
  \[ h_{\bf t}(n) = n\Phi_{40}(\lbrace \log_2 n \rbrace) +
    O(\log n), \]
  for some $1$-periodic continuous function $\Phi_{40}$.
\end{theorem}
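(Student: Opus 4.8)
The plan is to apply Theorem~\ref{thm:all-in-one} directly to the linear representation $(v, M_0, M_1, w)$ computed above for the sequence $x(n) = p_{\bf t}(n)$, with $q = 2$. All the necessary data has already been assembled in the paragraph preceding the statement: the matrix $M = M_0 + M_1$ has spectrum $\sigma(M) = \{4, 2, 1, 0, -1\}$ with all eigenvalues simple except $0$, which has algebraic (and, one should check, geometric-vs-Jordan) multiplicity $2$; and the choice of the maximum norm $\|\cdot\|_\infty$ gives $R = 2$ because $M_0$ and $M_1$ each have maximum row sum $2$.

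\smallskip

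First I would identify which eigenvalues contribute to the main term. Theorem~\ref{thm:all-in-one} sums over $\lambda \in \sigma(M)$ with $|\lambda| > R = 2$; the only such eigenvalue is $\lambda = 4$. Since $4$ is a simple eigenvalue, $m(4) = 1$, so the inner sum over $0 \le k < m(4)$ has just the single term $k = 0$, contributing $N^{\log_2 4}\,\Phi_{40}(\{\log_2 N\}) = N^2\,\Phi_{40}(\{\log_2 N\})$ for a $1$-periodic continuous function $\Phi_{40}$. Next I would bound the error term. The eigenvalues with $|\lambda| = R = 2$ are just $\lambda = 2$ itself, which is simple, so $\max\{m(\lambda) : |\lambda| = R\} = 1$, and the error term is $O(N^{\log_2 2}(\log N)^1) = O(N \log N)$. (The remaining eigenvalues $1, 0, -1$ all have $|\lambda| < R$, so they only affect the implied constant, not the order; one should note in passing that $R = 2$ is a genuine upper bound for the joint spectral radius even though it exceeds the spectral radius of some products — the theorem only requires an \emph{upper} bound.) Putting these together gives $X(N) = P_{\bf t}(N) = N^2 \Phi_{40}(\{\log_2 N\}) + O(N \log N)$, which is the first claimed formula after renaming $N$ to $n$.

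\smallskip

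For the second formula, I would simply divide by $n$: since $h_{\bf t}(n) = P_{\bf t}(n)/n$ by definition, we get $h_{\bf t}(n) = n\,\Phi_{40}(\{\log_2 n\}) + O(\log n)$, where $\Phi_{40}$ is the same periodic function. This step is entirely routine.

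\smallskip

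I do not expect any serious obstacle here — the heavy lifting is done by Theorem~\ref{thm:all-in-one} and by Walnut's computation of the linear representation. The only points requiring a little care are: (i) confirming that $4$ is indeed a simple eigenvalue of $M$ (so that there is no spurious $\log N$ factor in the main term), and (ii) confirming that the chosen $R = 2$ really does dominate the joint spectral radius, i.e. that $\|M_{r_1}\cdots M_{r_\ell}\|_\infty = O(2^\ell)$ for every product — which is immediate from submultiplicativity of the induced norm together with $\|M_0\|_\infty = \|M_1\|_\infty = 2$. Both are quick checks, so the proof is short. One caveat worth mentioning is that Theorem~\ref{thm:all-in-one} does not by itself guarantee that $\Phi_{40}$ is non-constant or even non-zero; establishing that $\Phi_{40}$ genuinely oscillates (so that $h_{\bf t}(n)$ does not have a clean asymptotic $cn$) would require examining the functional-equation characterization of $\Phi_{40}$ from~\cite{HK20}, but that refinement is not part of the present statement.
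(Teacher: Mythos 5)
Your proposal is correct and follows exactly the route the paper takes (which it leaves implicit): apply Theorem~\ref{thm:all-in-one} with $R=2$ from the maximum row sums, identify $\lambda=4$ with $m(4)=1$ as the sole eigenvalue above $R$ to get the $n^2\Phi_{40}$ main term, read off the $O(n\log n)$ error from $m(2)=1$, and divide by $n$. Your side remarks on the joint spectral radius bound and the possible non-constancy of $\Phi_{40}$ are accurate but not needed for the statement.
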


\section{Periodic complexity of the Rudin--Shapiro sequence}
We can determine the asymptotic growth of $h_{\bf x}(n)$ for other
automatic sequences ${\bf x}$ by first using Walnut to compute a
linear representation for $p_{\bf x}(i)$, and then applying
Theorem~\ref{thm:all-in-one}.  Let
\[{\bf rs} = r_0r_1r_2\cdots = 0001001000011101\cdots\]
be the \emph{Rudin--Shapiro sequence}, defined by
\[
  r_i = \begin{cases} 0 &\text{ if the number of $11$'s in the binary
      representation of $i$ is even,}\\ 1 &\text{ otherwise}. \end{cases} \]
If we use Walnut to compute a linear representation for $p_{\bf
  rs}(i)$,
the matrices $M_0$ and $M_1$ that we get are $31\times 31$, so we do
not show them here.  They each have maximum row sum $2$, so again we
can take $R=2$.  The set of eigenvalues of the matrix $M := M_0+M_1$
is $\sigma(M)=\{4,2,1,0,-1,-2\}$.  From the Jordan form of $M$, we
find $m(4)=1$ and $m(2)=m(-2)=2$.  Applying
Theorem~\ref{thm:all-in-one} thus gives the following result.

\begin{theorem}\label{thm:rs_Pandh}
  We have
  \[ P_{\bf rs}(n) = n^2\Phi_{40}(\lbrace \log_2 n \rbrace) +
    O(n\log^2 n), \]
  and
  \[ h_{\bf rs}(n) = n\Phi_{40}(\lbrace \log_2 n \rbrace) +
    O(\log^2 n), \]
  for some $1$-periodic continuous function $\Phi_{40}$.
\end{theorem}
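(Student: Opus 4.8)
The plan is to reduce Theorem~\ref{thm:rs_Pandh} to a direct application of Theorem~\ref{thm:all-in-one}, exactly mirroring the argument already used for Theorem~\ref{thm:t_Pandh}. First I would use Walnut to produce predicates \texttt{rsEq}, \texttt{rsRepWd}, and \texttt{rsLocPer} analogous to the Thue--Morse ones, working over \texttt{msd\_2} with the Rudin--Shapiro automaton \texttt{RS} in place of \texttt{T}; then the \texttt{eval rsLocPer\_enum} command yields a linear representation $(v, M_0, M_1, w)$ for $p_{\bf rs}(i)$ of dimension $31$. This is the computational heart of the argument, but it is routine in the sense that Walnut does it automatically; the only thing to record is that $M_0$ and $M_1$ each have maximum row sum $2$, which lets us take the norm $\|\cdot\|_\infty$ and the corresponding bound $R=2$ on the joint spectral radius, precisely as in the Thue--Morse case.

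Next I would compute $M := M_0 + M_1$ and determine $\sigma(M)$ together with the Jordan structure. One finds $\sigma(M) = \{4, 2, 1, 0, -1, -2\}$. The crucial point is which eigenvalues exceed $R=2$ in absolute value: only $\lambda = 4$ does, and $m(4) = 1$. Thus the main term of Theorem~\ref{thm:all-in-one} applied to $x(i) = p_{\bf rs}(i)$ and $X(N) = P_{\bf rs}(N)$ collapses to the single summand $N^{\log_2 4}\,\Phi_{40}(\{\log_2 N\}) = N^2 \Phi_{40}(\{\log_2 N\})$, where $\Phi_{40}$ is the $1$-periodic continuous function supplied by that theorem. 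For the error term I would take the maximum over the eigenvalues on the circle $|\lambda| = R = 2$, namely $\lambda \in \{2, -2\}$; since $m(2) = m(-2) = 2$, the error is $O\!\big(N^{\log_2 2} (\log N)^{2}\big) = O(n \log^2 n)$. This establishes the first displayed equation; dividing through by $n$ (using $h_{\bf rs}(n) = P_{\bf rs}(n)/n$) gives the second, with the same periodic function $\Phi_{40}$ and error $O(\log^2 n)$.

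The one step that genuinely requires care is the claim $m(2) = m(-2) = 2$, since it is this Jordan-block size — rather than the mere presence of the eigenvalues $\pm 2$ on the critical circle — that inflates the error term from $O(n\log n)$ (as in Theorem~\ref{thm:t_Pandh}) to $O(n\log^2 n)$. I would verify this by computing the Jordan normal form of the $31\times 31$ integer matrix $M$ in a computer algebra system (e.g., the same Maple worksheet output by Walnut), checking that the generalized eigenspaces for $2$ and for $-2$ are each two-dimensional while $\dim\ker(M - 2I) = \dim\ker(M + 2I) = 1$. Everything else — the identification of the leading term, the periodicity and continuity of $\Phi_{40}$, and the final division by $n$ — is immediate from Theorem~\ref{thm:all-in-one} and needs no further work. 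I would also remark, as in the Thue--Morse discussion, that the choice of most-significant-digit-first versus least-significant-digit-first representation is immaterial here, since transposing the representation does not change $\sigma(M)$ or the Jordan structure.
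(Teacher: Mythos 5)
Your proposal follows exactly the paper's own argument: compute the $31\times 31$ linear representation for $p_{\bf rs}(i)$ with Walnut, take $R=2$ from the maximum row sums, determine $\sigma(M)=\{4,2,1,0,-1,-2\}$ with $m(4)=1$ and $m(2)=m(-2)=2$ from the Jordan form, and apply Theorem~\ref{thm:all-in-one}. Your added emphasis on verifying the Jordan-block sizes at $\pm 2$ (which produce the $\log^2$ factor) is a correct and welcome clarification, but the route is the same.
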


(The function $\Phi_{40}$ in Theorem~\ref{thm:rs_Pandh} is, of course,
different from the one in Theorem~\ref{thm:t_Pandh}.)

\section{Periodic complexity of the period-doubling sequence}

  Next we determine the asymptotic behaviour of $P_{\bf pd}(n)$ and $h_{\bf
    pd}(n)$, where ${\bf pd}$ is the \emph{period-doubling word},
  i.e., the fixed point of the morphism $0 \to 01$, $1 \to 00$.
Table~\ref{tab:per_fn_pd} shows some initial values of $p_{\bf pd}(i)$.
\begin{center}
  \begin{tabular}{|*{17}{c|}}
    \hline
    $i$ & 0 & 1 & 2 & 3 & 4 & 5 & 6 & 7 & 8 & 9 & 10 & 11 & 12 & 13 & 14
    & 15 \\
    \hline
    $p_{\bf pd}(i)$ & 1& 2& 4& 1& 1& 8& 2& 2& 2& 2& 16& 1& 1& 4& 4& 1\\
    \hline
  \end{tabular}
  \captionof{table}{Initial values of $p_{\bf pd}(i)$\label{tab:per_fn_pd}}
\end{center}
  Our goal is to show that $h_{\bf pd}(n) = \Theta(\log n)$ (i.e., its periodic
complexity is rather more like that of the Fibonacci word than the
Thue--Morse word).

We begin by using Walnut to compute the following linear representation for
$p_{\bf pd}(i)$:
\[v = [1,0,0,0,0,0],\]
\[ M_0 =
  \begin{bmatrix}
    1&0&0&0&0&0\\
    0&0&0&1&0&1\\
    0&0&0&0&0&2\\
    0&0&0&0&1&0\\
    0&0&0&1&0&1\\
    0&0&0&0&0&0
  \end{bmatrix},
  \quad\quad
  M_1 =
  \begin{bmatrix}
    0&1&1&0&0&0\\
    0&0&0&0&1&0\\
    0&0&0&0&0&0\\
    0&1&1&0&0&0\\
    0&1&1&0&0&0\\
    0&0&2&0&0&0
\end{bmatrix},
\]
\[w = [1,1,1,1,1,1]^T.\]

Now, if we try to apply Theorem~\ref{thm:all-in-one} to $P_{\bf pd}(n)$, we
run into the following problem.  The maximum row sum of $M_0$ and
$M_1$ is $2$, so we could take $R=2$, but the largest eigenvalue of
$M := M_0+M_1$ is also $2$ (with $m(2)=2$), which means that in this
case the ``error term'' in Theorem~\ref{thm:all-in-one} dominates, and
we don't obtain the desired asymptotics.  However, using other
methods, we can obtain the following bounds.

\begin{theorem}\label{thm:pd_Pandh}
  For $n \geq 1$, we have
  \[ (1/3\log_2 n -1/18)n+4/9 \leq P_{\bf pd}(n) \leq (4/3\log_2 n +22/9)n+5/9\]
  and
  \[ 1/3\log_2 n -1/18 \leq h_{\bf pd}(n) \leq 4/3\log_2 n +3.\]
\end{theorem}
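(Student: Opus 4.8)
The plan is to obtain a combinatorial description of $p_{\bf pd}(i)$ analogous to Proposition~\ref{prop:per_fn}, and then sum it directly rather than appealing to Theorem~\ref{thm:all-in-one}. The period-doubling word is automatic, so Walnut can produce an automaton for the pair $(i,p_{\bf pd}(i))$ in the same way as for ${\bf t}$ (using the analogues of \texttt{tmEq}, \texttt{tmRepWd}, \texttt{tmLocPer} with \texttt{PD} in place of \texttt{T}). Inspecting Table~\ref{tab:per_fn_pd} together with that automaton, I expect to find that $p_{\bf pd}(i)$ is always a power of $2$, and more precisely that the ``large'' values are controlled by the $2$-adic valuation of $i$ or of a nearby integer: something like $p_{\bf pd}(i) = 2^{\nu_2(i)+1}$ or $p_{\bf pd}(i)=2^{\nu_2(i+1)}$ on an arithmetically defined set of indices, with $p_{\bf pd}(i)$ bounded (in $\{1,2,4\}$, say) off that set. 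The morphism $0\to 01,\ 1\to 00$ has incidence matrix with eigenvalues $2$ and $-1$, and the hierarchical block structure it imposes is exactly what produces local periods that are powers of $2$ growing like the block size; this is why the summatory function carries the extra $\log n$ factor compared to the Fibonacci case where periods grow geometrically but sparsely.

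Next I would verify such a description rigorously. The cleanest route is to let Walnut certify the relevant predicates: e.g.\ that for every $i$ the value $p_{\bf pd}(i)$ is a power of $2$, that $p_{\bf pd}(i)\le 4$ on the complement of the distinguished set, and that on the distinguished set $p_{\bf pd}(i)$ equals the predicted power of $2$ as a function of $\nu_2(\cdot)$. These are all first-order statements in the automatic structure and can be checked mechanically, so this step, while essential, should be routine. With the description in hand, the summatory function splits as $P_{\bf pd}(n)=\sum_{i<n,\ i\in S}p_{\bf pd}(i)+\sum_{i<n,\ i\notin S}p_{\bf pd}(i)$; the second sum is $O(n)$, and the first is a sum of the form $\sum_{i<n,\ i\in S}2^{\nu_2(\cdot)}$, which one evaluates by grouping indices according to their $2$-adic valuation: the number of $i<n$ in a given valuation class is roughly $n/2^{k+1}$, so the contribution of class $k$ is roughly $(n/2^{k+1})\cdot 2^{k}=n/2$, and there are about $\log_2 n$ classes, yielding the $n\log_2 n$ main term with explicit constants. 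Care with the endpoints $\lfloor\cdot\rfloor$ produces the additive constants $4/9$, $5/9$, and the slack between $1/3$ and $4/3$ in the leading coefficient will come from the gap between the lower bound (valid infinitely often / on average) and the crude upper bound that overcounts the small-index classes.

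Finally, the bound on $h_{\bf pd}(n)=P_{\bf pd}(n)/n$ is immediate by dividing the two-sided bound on $P_{\bf pd}(n)$ by $n$, with the slightly looser constant $3$ in place of $22/9$ absorbing the lower-order terms for all $n\ge 1$ (a few small cases checked by hand if needed). The main obstacle, and the only genuinely creative step, is pinning down the exact arithmetic form of the local period $p_{\bf pd}(i)$ from the automaton — in particular identifying the set $S$ and the right valuation expression — since the rest is bookkeeping; once the automaton is read correctly, the summation and the constant-chasing are mechanical. If the clean closed form proves elusive, the fallback is to bound $p_{\bf pd}(i)$ between two such valuation expressions (a guaranteed upper envelope and a guaranteed lower envelope, each automaton-checkable), which already suffices for a $\Theta(\log n)$ conclusion and for explicit, if not optimal, constants.
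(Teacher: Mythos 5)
There is a genuine gap: your entire argument is deferred to a closed-form description of $p_{\bf pd}(i)$ that you never actually produce, and the specific candidates you float are contradicted by the paper's own data. Table~\ref{tab:per_fn_pd} gives $p_{\bf pd}(5)=8$ and $p_{\bf pd}(10)=16$, while $2^{\nu_2(5)+1}=2$, $2^{\nu_2(6)}=2$, $2^{\nu_2(10)+1}=4$, $2^{\nu_2(11)}=1$; so neither $2^{\nu_2(i)+1}$ nor $2^{\nu_2(i+1)}$ captures the large values, and it is exactly the large values (which occur at the sparse positions $1,2,5,10,\ldots$, one per dyadic scale, not along a valuation class of density $\sim 2^{-k-1}$) that drive the $n\log n$ growth. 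Consequently your density heuristic (``$n/2$ per class'') does not match the actual leading constant $2/3$, and the bookkeeping you describe could not arrive at the theorem's specific constants $1/3$, $4/3$, $4/9$, $5/9$, $22/9$. Your own fallback concedes only a $\Theta(\log n)$ conclusion with unspecified constants, which is a weaker statement than the one to be proved. Since you identify the combinatorial description as ``the only genuinely creative step'' and then leave it as a conjecture to be checked later, the proposal is a research plan rather than a proof.

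For contrast, the paper avoids needing any pointwise description of $p_{\bf pd}(i)$: it observes that summing the linear representation over all $i<2^\ell$ gives $P_{\bf pd}(2^\ell)=v(M_0+M_1)^\ell w=vM^\ell w$ exactly, uses the minimal polynomial $(x-2)^2(x+2)(x-1)(x+1)$ of $M$ to write $vM^\ell w=(A+B\ell)2^\ell+C(-2)^\ell+D+E(-1)^\ell$, determines the constants from five initial values, and then sandwiches $P_{\bf pd}(n)$ between $P_{\bf pd}(2^\ell)$ and $P_{\bf pd}(2^{\ell+1})$ using monotonicity of the summatory function. If you want to salvage your approach, you would need to extract the true automaton-level description of $p_{\bf pd}(i)$ (whose exceptional set is indexed by binary expansions of the form $1010\cdots$, not by $2$-adic valuations) and redo the summation accordingly; but the paper's route via the matrix $M$ is both shorter and already yields the exact constants.
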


\begin{proof}
  For $\ell \geq 0$, we have
  \begin{align*}
    P_{\bf pd}(2^\ell) &= \sum_{i < 2^\ell} p(i) \\
    &= \sum_{i_0,\ldots,i_{\ell-1} \in \{0,1\}} vM_{i_{\ell-1}}\cdots
      M_{i_0}w \\
              &= v(M_0+M_1)^\ell w \\
              &= vM^\ell w.
  \end{align*}

To obtain a formula for $vM^\ell w$, we first compute the
minimal polynomial of $M$:
\[ m_M(x) = (x-2)^2(x+2)(x-1)(x+1). \]
It follows then that
\begin{equation}\label{eq:pd_linrec}
vM^\ell w = (A+B\ell)2^\ell + C(-2)^\ell + D +
E(-1)^\ell,
\end{equation}
for some constants $A, \ldots, E$.  To compute these constants, we
compute $vM^\ell w$ (i.e., $P_{\bf pd}(2^\ell)$) for
$\ell = 0,\ldots,4$, which gives the values $1, 3, 8, 21, 52$.
We then substitute these values into \eqref{eq:pd_linrec} to obtain a
system of linear equations in the variables $A, \ldots, E$.  When we
solve this system of linear equations we get
$$A=5/9, B=2/3, C=0, D=1/2, E=-1/18.$$  Thus, we have
\[
  P_{\bf pd}(2^\ell) = (5/9+(2/3)\ell)2^\ell + 1/2  - 1/18(-1)^\ell,
\]
and so
\[
  (5/9+(2/3)\ell)2^\ell + 4/9 \leq P_{\bf pd}(2^\ell) \leq (5/9+(2/3)\ell)2^\ell + 5/9.
\]

Now write $2^\ell \leq n < 2^{\ell+1}$, so that $\ell \leq \log_2 n <
\ell+1$.  Then
\begin{align*}
  (5/9+(2/3)\ell)2^\ell + 4/9
  &\leq P_{\bf pd}(n)
  \leq (5/9+2/3(\ell+1))2^{\ell+1} + 5/9 \\
  (5/9+2/3(\log_2 n-1))(n/2) + 4/9
  &\leq P_{\bf pd}(n)
  \leq (5/9+2/3(\log_2 n+1))(2n) + 5/9 \\
  (1/3\log_2 n-1/18)n + 4/9
  &\leq P_{\bf pd}(n)
  \leq (4/3\log_2 n+22/9)n + 5/9,
\end{align*}
and
\[
  1/3\log_2 n-1/18 \leq h_{\bf pd}(n) \leq 4/3\log_2 n +3.
\]
\end{proof}

\section*{Acknowledgments}
We thank Jeffrey Shallit for suggesting the approach used in the proof
of Theorem~\ref{thm:pd_Pandh}.

\end{document}